\def\thtext#1{
  \catcode`@=11
  \gdef\@thmcountersep{. #1}
  \catcode`@=12
}
\def\threst{
  \catcode`@=11
  \gdef\@thmcountersep{.}
  \catcode`@=12
}
\theoremstyle{plain}
\newtheorem{thm}{Theorem}
\newtheorem{prop}{Proposition}[section]
\newtheorem{cor}[prop]{Corollary}
\newtheorem{ass}[prop]{Assertion}
\newtheorem{lem}[prop]{Lemma}
\theoremstyle{definition}
\newtheorem{dfn}[prop]{Definition}
 \def\.{.\spacefactor\@m}
\newcommand{\e}{\varepsilon}
\newcommand{\g}{\gamma}
\newcommand{\G}{\Gamma}
\newcommand{\R}{\mathbb{R}}
\newcommand{\Z}{\mathbb{Z}}
\newcommand{\rom}[1]{{\em #1}}
\renewcommand{\:}{\colon}
\renewcommand{\c}{\circ}
\renewcommand{\d}{\partial}
\newcommand{\oPi}{\stackrel{\raise-2pt\hbox{$\c$}}\Pi}
\newcommand{\oW}{\stackrel{\raise-2pt\hbox{$\c$}}W}
\newcommand{\sm}{\setminus}
\renewcommand{\ss}{\subset}
\newcommand{\x}{\times}
\newcommand{\mst}{{\operatorname{mst}}}
\newcommand{\smt}{{\operatorname{smt}}}
\newcommand{\sr}{\operatorname{sr}}
\begin{document}
 \title{Branched Coverings and Steiner Ratio}
 \author{A.~O.~Ivanov, A.~A.~Tuzhilin\\
\small  Lomonosov Moscow State University, Faculty of Mechanics and Mathematics}
 \date{}
 \maketitle

 \begin{abstract}
For a branched locally isometric covering of metric spaces with intrinsic metrics, it is proved that the Steiner ratio of the base is not less than the Steiner ratio of the total space of the covering. As applications, it is shown that the Steiner ratio of the surface of an isosceles tetrahedron is equal to the Steiner ratio of the Euclidean plane, and that the Steiner ratio of a flat cone with angle of $2\pi/k$ at its vertex is also equal to the Steiner ratio of the Euclidean plane.
\end{abstract}

\section {Introduction}
\markright {\thesection.~Introduction}
In the wide spectrum of Optimal Connection Problems, estimations and calculations of Steiner ratio of a metric space play a special part. Recall necessary definitions and facts, see details in~\cite{ITbookWP} or~\cite{ITJar}. Let  $X$ be a metric space with a distance function $\rho$. For each its finite subset $M$ there exists a \emph{minimal spanning tree}, i.e., a tree of the least possible weight among all the trees with vertex set $M$, where the weight of an edge $xy$ is defined as the distance $\rho(x,y)$ in $X$ between the corresponding vertices. By $\mst_X(M)$ we denote the length of a minimal spanning tree for $M\subset X$. From the times of C.~Gauss it is well-known that sometimes one can  connect the same set $M$ by a shorter tree permitting additional vertices. The value 
$$
\smt_X(M)=\inf_{N:M\subset N}\bigl\{\mst_X(N)\bigr\},
$$
where the infimum is taken over all finite subsets $N$ of the space $X$ containing $M$, is called the \emph{length of a shortest tree for the set $M$}. If this infimum is attained at some set $N$, then the corresponding tree is referred as a  \emph{shortest tree\/} or a  \emph{Steiner minimal tree\/} for $M$.

The problem of a shortest tree finding for a given finite subset of a metric space (so called  \emph{generalized Steiner problem\/}) is very complicated from the computational point of view, see~\cite{GGJ}. Therefore, in practice some heuristic algorithms are usually applied to solve it, and the most frequent of those heuristics is a minimal spanning tree construction (notice that a minimal spanning tree can be constructed in a polynomial time). Steiner ratio is used as a measure of a relative error of such approximation in the worst possible situation. It was introduced by E.\,N.~Gilbert and H.\,O.~Pollack~\cite{GilPol} in the case of Euclidean plane. In general case it can be defined s follows.

\begin{dfn}
The number
 $$
\sr_X(M)=\frac{\smt_X(M)}{\mst_X(M)}
 $$
is called the {\em Steiner ratio of a finite subset $M$ of a metric space $X$}. The {\em Steiner ratio $\sr(X)$ of a metric space $X$} is defined as the following value:
 $$
\sr(X)=\inf_{M:M\ss X}\sr_X(M),
 $$
where the infimum is take over all finite subsets $M$ of the set  $X$, consisting of at least two elements.
\end{dfn}

In the same paper~\cite{GilPol}, E.\,N.~Gilbert and H.\,O.~Pollack conjectured that the Steiner ratio of Euclidean plane is attained at the vertex set of a regular triangle and, hence, is equal to $\sqrt 3/2$. But this conjecture is not proved yet, despite many attempts of many authors. The most famous among those efforts is the paper of D.\,Z.~Du and F.~Hwang~\cite{DuHwang}, see details in~\cite{ITLup}, \cite{ITAlg}, and~\cite{IT22}.

At present, an exact value of the Steiner ratio is calculated for the Manhattan plane~\cite{Hw}, Lobachevski plane~\cite{IK}, and Hadamard  spaces (i.e., simply connected A.\,D.~Alexandrov spaces) of negative curvature~\cite{Zav}. Also, there are several estimates and theorems describing some properties of this interesting characteristic of metric spaces, see a review in~\cite{Cies} and~\cite{CiesRep}. Also, recently some analogues of the Steiner ratio related to the concept of a minimal filling of a finite metric space were introduced~\cite{ITZapMS}. That stimulates new research activities, see, for example, the results of A.~Pakhomova on continuity and discontinuity of the ratios~\cite{Pakh}, estimates of Z.~Ovsyannikov~\cite{OvsComp} concerning spaces of compacts, and also a recent  review~\cite{ITZapAMS}.

The aim of the present paper is to generalize the results of the paper~\cite{CiesIT} on the relations between the Steiner ratio of the base and the one of the total space of a locally isometric covering of Riemannian manifolds, see Theorem~\ref{th:bundle},  to the case of metric spaces with an intrinsic metric and branched coverings, see Theorem~\ref{thm:branch_bundle}.  As applications, it is shown that the Steiner ratio of the surface of a triangular pyramid with equal faces is equal to the Steiner ratio of the Euclidean plane, and the Steiner ratio of a flat cone with angle of $2\pi/k$ at its vertex is also equal to the Steiner ratio of the Euclidean plane (Corollaries~\ref{cor:2-dim_tetr} and~\ref{cor:2-dim_cones}).

\section {Preliminaries}
\markright{\thesection.~Preliminaries}

It is well-known that $1/2\le \sr(X)\le 1$ for any metric space $X$. We also need the following simple result that can be found, for example in~\cite{Cies}.

\begin{ass}\label{ass:sub}
Let $Y$ be a subset of a metric space $X$ endowed with the metric obtained as restriction of the initial distance function from $X$. Then the Steiner ratio of the metric space $Y$ is not less than the Steiner ratio of $X$.
\end{ass}

Continuity of the Steiner ratio $\sr_X(M)$ as a function on the elements of the set $M$ follows from the continuity of the length of minimal spanning tree and the length of a shortest tree. Namely, the following result holds.

\begin{ass}\label{ass:cont}
Let  $M=\{m_1,\ldots,m_n\}$ be a finite subset of a metric space $X$. The Steiner ratio $\sr_X(M)$ depends continuously on $M$, i.e., the function $f(m_1,\ldots,m_n)=\sr_X(M)$ is continuous on $X^n$.
\end{ass}

Recall the main results of paper~\cite{CiesIT}.

\begin{thm}\label{thm:riem}
The Steiner ratio of an arbitrary connected $n$-dimensional Riemannian manifold does not exceed the Steiner ratio of the Euclidean space  $\R^n$.
\end{thm}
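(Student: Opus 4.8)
The plan is to exploit the fact that a Riemannian manifold is infinitesimally Euclidean, so that a tiny rescaled copy of any Euclidean configuration, planted into the manifold via the exponential map, has Steiner ratio as close as we wish to the Euclidean one. Denote the manifold by $\cM$. Since $\sr(\R^n)=\inf_N\sr_{\R^n}(N)$, it suffices to show that for every finite $N\ss\R^n$ and every $\e>0$ there is a finite $M\ss\cM$ with $\sr_\cM(M)\le\sr_{\R^n}(N)+\e$; indeed, then $\sr(\cM)=\inf_M\sr_\cM(M)\le\sr_{\R^n}(N)+\e$, and letting $\e\to0$ and taking the infimum over $N$ yields $\sr(\cM)\le\sr(\R^n)$.

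First I would fix a point $p\in\cM$ and work in normal coordinates centered at $p$, identifying a neighbourhood of $p$ with a ball in $T_p\cM\cong\R^n$ on which the metric tensor satisfies $g_{ij}(x)=\dl_{ij}+O(|x|^2)$. Consequently, for points lying in a coordinate ball of radius $r$ the Riemannian and Euclidean distances agree up to a single multiplicative factor $1+O(r^2)$: comparison with straight-line paths gives the upper bound, while the observation that the Riemannian length of any curve differs from its Euclidean length by a factor $1+O(r^2)$ gives the lower bound. Given $N=\{x_1,\dots,x_k\}\ss\R^n$ and a small scale $\l>0$, I set $m_i=\exp_p(\l x_i)$ and $M_\l=\{m_1,\dots,m_k\}$, a configuration lying within distance $O(\l)$ of $p$.

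Next I would establish the asymptotics of both tree lengths. The minimal spanning tree of $M_\l$ uses only the pairwise distances $\r(m_i,m_j)=\l\,|x_i-x_j|\bigl(1+O(\l^2)\bigr)$, where the error factor is uniform over all pairs; taking the minimum of the total weight over all spanning trees therefore gives $\mst_\cM(M_\l)=\l\,\mst_{\R^n}(N)\bigl(1+O(\l^2)\bigr)$. For the shortest tree I argue in two directions. Mapping a near-optimal Euclidean Steiner tree for $N$ into $\cM$ by $\exp_p(\l\,\cdot\,)$ and replacing its edges by geodesics produces a tree spanning $M_\l$ of length at most $\l\,\smt_{\R^n}(N)\bigl(1+O(\l^2)\bigr)$, which bounds $\smt_\cM(M_\l)$ from above. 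For the lower bound, since the infimum defining $\smt_\cM(M_\l)$ is approached by trees of length comparable to this upper bound, and any such tree, being connected, of total length $O(\l)$, and containing $m_1$ (which lies $O(\l)$ from $p$), is automatically contained in the coordinate ball, it suffices to estimate such trees $T$. Pulling $T$ back by $\frac1\l\exp_p^{-1}$ yields a connected set spanning $N$ in $\R^n$ of Euclidean length at most $\frac1\l\bigl(1+O(\l^2)\bigr)\Len_\cM(T)$, whence $\smt_\cM(M_\l)\ge\l\,\smt_{\R^n}(N)\bigl(1-O(\l^2)\bigr)$.

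Combining the two estimates gives $\sr_\cM(M_\l)=\smt_\cM(M_\l)/\mst_\cM(M_\l)\to\sr_{\R^n}(N)$ as $\l\to0$, so a sufficiently small $\l$ furnishes the required configuration $M=M_\l$. The step I expect to be the main obstacle is the lower bound for $\smt_\cM(M_\l)$: one must guarantee that a nearly optimal Steiner tree does not escape the normal-coordinate chart, so that the metric comparison applies to the entire tree, and that it may be taken to consist of rectifiable arcs whose length is genuinely controlled. The containment follows from the smallness of the total length together with connectedness, but making this rigorous — including the treatment of the possibly non-attained infimum in the definition of $\smt$ by passing to near-optimal trees — is the delicate part of the argument.
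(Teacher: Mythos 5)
The paper does not actually prove this theorem: it is quoted from \cite{CiesIT}, and the proof there is exactly the localization argument you give --- plant a rescaled near-optimal Euclidean configuration via the exponential map and use that the metric in normal coordinates is Euclidean up to a factor $1+O(\l^2)$. Your argument is correct; note only that the step you single out as the delicate one, the lower bound $\smt_\cM(M_\l)\ge\l\,\smt_{\R^n}(N)\bigl(1-O(\l^2)\bigr)$, is not needed for the inequality $\sr(\cM)\le\sr(\R^n)$: an upper bound on $\sr_\cM(M_\l)$ requires only the upper bound on the numerator $\smt_\cM(M_\l)$ (push a Euclidean Steiner tree forward and geodesify its edges) and the lower bound on the denominator $\mst_\cM(M_\l)$ (pairwise distance comparison), so the containment problem for near-optimal trees in $\cM$ can be dispensed with entirely.
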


Recall that a continuous mapping $f\:X\to Y$ of path connected topological spaces is called a \emph{covering}, if each point $y\in Y$ possesses a neighborhood $V$ such that its complete pre-image $f^{-1}(V)$ is homeomorphic to $V\x S$, where  $S$ is some fixed set with discrete topology. Connected components of the complete pre-image $f^{-1}(V)$ are called \emph{leaves over $V$},  and the complete pre-image $f^{-1}(y)$ is called the \emph{fiber over $y$}. A neighborhood $V$ is referred as a  \emph{normal neighborhood of the point $y$}. A neighborhood  $U$ of a point  $x\in X$ is called  \emph{normal}, if its image $f(U)$ is a normal neighborhood of the point $f(x)$, and the restriction of  $f$ onto $U$ is a homeomorphism. It is clear that each normal neighborhood $U$ of the point $x$ is a leave over its $f$-image.

Let  $X$ and $Y$ be metric spaces. A covering $f\:X\to Y$ is called \emph{locally isometric}, if each point $x\in X$ has a neighborhood such that the restriction of $f$ onto it is an isometry. The next statement is evident.

\begin{ass}\label{ass:cover_prop}
Let  $f\:X\to Y$ be a locally isometric covering of path connected metric spaces. Then 
\begin{itemize}
\item the mapping  $f$ preserves the length of all measurable curves, namely, if $\g\:[a,b]\to X$ is a measurable curve, then the length of the curve $\g$ is equal to the length of the curve $\g\c f$\rom;
\item if the metrics on  $X$ and $Y$ are intrinsic, then the mapping $f$ does not increase the distances between points.
\end{itemize}
\end{ass}

\begin{thm}[see~\cite{CiesIT}]\label{th:bundle}
Let $W$ and $M$ be path connected Riemannian manifolds and $\pi\:W\to M$ be a locally isometric covering. Then the Steiner ratio of the base $M$ is not less than the Steiner ratio of the total space $W$.
\end{thm}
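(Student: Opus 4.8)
The plan is to prove the equivalent inequality $\sr(W)\le\sr(M)$ by lifting configurations from the base to the total space. Concretely, I will show that every finite set $P=\{p_1,\dots,p_n\}\ss M$ admits a lift $Q=\{q_1,\dots,q_n\}\ss W$ with $\pi(q_i)=p_i$ and $\sr_W(Q)\le\sr_M(P)$; then $\sr(W)\le\sr_W(Q)\le\sr_M(P)$ for every $P$, and taking the infimum over $P$ gives $\sr(W)\le\sr(M)$. Fix $P$ and $\e>0$, choose a finite set $N$ with $P\ss N\ss M$ and $\mst_M(N)<\smt_M(P)+\e$, and let $T$ be a minimal spanning tree on $N$ realized in $M$ by geodesic edges, with realization map $\iota\:T\to M$ and $\Len(\iota(T))=\mst_M(N)$.

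The central step is to lift $\iota$. As an abstract $1$-complex a tree is simply connected and locally path connected, so $\iota$ lifts through the covering to a map $\tilde\iota\:T\to W$ with $\pi\c\tilde\iota=\iota$; I lift the map rather than its image, so possible self-crossings of $\iota(T)$ in $M$ cause no trouble. Put $q_i=\tilde\iota(p_i)$ and $Q=\{q_1,\dots,q_n\}$. Since $\pi$ is a local isometry it preserves the lengths of curves (first item of Assertion~\ref{ass:cover_prop}), so $\tilde\iota(T)$ is a connected network in $W$ containing all the $q_i$, with length $\Len(\tilde\iota(T))=\Len(\iota(T))=\mst_M(N)$. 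Approximating $\tilde\iota(T)$ by a dense finite sample of its points and joining consecutive samples by chords, whose $W$-distances do not exceed the corresponding arc lengths, yields $\smt_W(Q)\le\Len(\tilde\iota(T))<\smt_M(P)+\e$.

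For the denominator I compare spanning trees directly. Because Riemannian manifolds carry intrinsic metrics, $\pi$ does not increase distances between points (second item of Assertion~\ref{ass:cover_prop}), and since $\pi(q_i)=p_i$ we get $\rho_W(q_i,q_j)\ge\rho_M(p_i,p_j)$ for all $i,j$. Hence the $W$-weight of every combinatorial spanning tree on $Q$ dominates its $M$-weight on $P$, and minimizing over tree structures gives $\mst_W(Q)\ge\mst_M(P)$. Combining the two estimates,
$$
\sr_W(Q)=\frac{\smt_W(Q)}{\mst_W(Q)}\le\frac{\smt_M(P)+\e}{\mst_M(P)},
$$
and letting $\e\to0$ gives $\sr_W(Q)\le\sr_M(P)$, which completes the argument.

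I expect the main difficulty to lie in the lifting step and its length bookkeeping: one must treat $T$ as an abstract simply connected complex mapped into $M$, so that the covering lifting criterion applies even when the minimal spanning tree is not embedded, and one must pass carefully between the continuous length of the lifted network $\tilde\iota(T)$ and the quantity $\smt_W(Q)$, which is defined through finite point sets and their minimal spanning trees. Everything else reduces to the two monotonicity properties of $\pi$ recorded in Assertion~\ref{ass:cover_prop}. I note finally that this lifting mechanism is precisely what will have to be adapted near branch points in the passage to branched coverings in Theorem~\ref{thm:branch_bundle}.
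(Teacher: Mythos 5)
Your proposal is correct and follows essentially the same route as the paper's proof of the more general Theorem~\ref{thm:nonbranch_bundle}: lift a near-optimal tree from the base through the covering (using simple connectivity of the parameterizing tree, as in Assertion~\ref{ass:lift}), then use length preservation to bound $\smt$ in the total space from above and the non-increase of distances to bound $\mst$ from below. The only cosmetic differences are that you work with a minimal spanning tree on an enlarged finite set $N\supset P$ rather than directly with a near-optimal Steiner network (equivalent by the definition of $\smt$), and that you should take $\e$-almost-geodesic edges rather than exact geodesics since the manifolds are not assumed complete; neither affects the argument.
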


\section {Main Result}
\markright {\thesection.~Main Result}
Let  $f\:X\to Y$ be a continuous mapping of path connected topological spaces,  and $B\subset Y$ be a discrete subset of $Y$. Assume that the complete pre-image  $A=f^{-1}(B)$ of the set $B$ is also a discrete subset in $X$ and $f\:X\setminus A\to Y\setminus B$ is a covering. Then $f$ is called a  \emph{branched covering}, $Y$ is referred as the \emph{base} and $X$ is referred as the  \emph{total space\/} of the covering, $A$ is called the \emph{set of singular points}, and $B$ is called the \emph{set of singular values\/} of the covering. If $X$ and $Y$ are metric spaces, and $f\:X\setminus A\to Y\setminus B$ is a locally isometric covering, then the branched covering $f$ is also called \emph{locally isometric}.

Let us state the main result of the paper.

\begin{thm}\label{thm:branch_bundle}
Let $X$ and $Y$ be path connected metric spaces with intrinsic metrics, and let $f\:X\to Y$ be a locally isometric branched covering. Then the Steiner ratio of the base $Y$ is not less than the Steiner ratio of the total space $X$.
\end{thm}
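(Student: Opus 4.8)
The plan is to establish $\sr(X)\le\sr(Y)$ by showing that $\sr_Y(N)\ge\sr(X)$ for every finite $N\subset Y$ with at least two points, after which taking the infimum over $N$ gives the theorem. Fix such an $N$. Using the continuity of the Steiner ratio as a function of the configuration (Assertion~\ref{ass:cont}) together with the discreteness of the singular set $B$, I would first replace $N$ by a nearby configuration $N'$ with $N'\cap B=\0$ and $\sr_Y(N')\le\sr_Y(N)+\e$; it therefore suffices to treat the case $N\cap B=\0$.

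The core of the argument is a lifting construction. Fix $\e>0$ and choose a finite tree $T\subset Y$ spanning $N$, with vertex set $P\supset N$ and edges realized by shortest paths, of length at most $\smt_Y(N)+\e$. The key step is to arrange that $T$ avoids $B$: since $B$ is discrete, $T\cap B$ is finite, and since $Y\sm B$ is path connected in the ambient intrinsic metric, each passage of $T$ through a singular value can be rerouted inside a small punctured ball with an arbitrarily small increase of length. After this modification $T\subset Y\sm B$, still spans $N$, and has length at most $\smt_Y(N)+2\e$. Since $T$ is a tree, hence simply connected and locally path connected, and $f\colon X\sm A\to Y\sm B$ is a covering, the inclusion of $T$ lifts, after a choice of basepoint, to a connected finite network $\tilde T\subset X\sm A$; because $f$ is locally isometric away from $A$, the lift preserves lengths (Assertion~\ref{ass:cover_prop}), so $\Len(\tilde T)=\Len(T)$. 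Let $M=\tilde N$ be the lift of the terminal set $N$; then $f$ maps $M$ bijectively onto $N$.

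I would then estimate the two lengths for this fixed $M$. On one hand, $\tilde T$ is a connected network through $M$ with finite vertex set $P'\supseteq M$, and $\mst_X$ uses distances that do not exceed the lengths of the edge-curves, so $\smt_X(M)\le\mst_X(P')\le\Len(\tilde T)=\Len(T)\le\smt_Y(N)+2\e$. On the other hand, $f$ restricted to $M$ is a bijection onto $N$ that does not increase distances (Assertion~\ref{ass:cover_prop}), so the $f$-image of a minimal spanning tree of $M$ is a spanning tree of $N$ of weight at most $\mst_X(M)$, whence $\mst_Y(N)\le\mst_X(M)$. Combining the two estimates,
$$
\sr(X)\le\sr_X(M)=\frac{\smt_X(M)}{\mst_X(M)}\le\frac{\smt_Y(N)+2\e}{\mst_Y(N)}.
$$
Letting $\e\to0$ gives $\sr(X)\le\smt_Y(N)/\mst_Y(N)=\sr_Y(N)$, and the infimum over $N$ yields $\sr(X)\le\sr(Y)$.

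I expect the main obstacle to be the rerouting step: one must show that the shortest-tree infimum over trees in $Y$ equals the one over trees avoiding the discrete set $B$, i.e. that a shortest path through a singular value can be pushed off it at vanishing cost. This is exactly where the branched (rather than ordinary) nature of the covering must be controlled; it relies on $Y\sm B$ being path connected, so that no point of $B$ is a cut point, and on the metric being intrinsic, so that the detour length tends to zero as the puncture shrinks. An alternative to rerouting is to lift $T$ directly through the branch points, which instead requires verifying that the lifts of the edges of $T$ meeting at a singular value glue together at a common point of $A$.
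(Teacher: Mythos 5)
Your lifting-and-comparison core for a configuration $N$ disjoint from $B$ --- lift a near-optimal tree to $X\sm A$, use length preservation to get $\smt_X(M)\le\smt_Y(N)+2\e$, use the fact that $f$ does not increase distances to get $\mst_Y(N)\le\mst_X(M)$, and combine --- is exactly the paper's proof of the non-branched case (Theorem~\ref{thm:nonbranch_bundle}). The divergence, and the genuine gap, lies in how you handle the branch locus. Your rerouting step asserts that a near-optimal tree in $Y$ can be pushed off the discrete set $B$ at arbitrarily small cost, justified by ``$Y\sm B$ is path connected and the metric is intrinsic, so the detour length tends to zero as the puncture shrinks.'' That implication is false. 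Take $Y$ to be the boundary of a unit square with its intrinsic metric and $b$ an interior point of an edge: $Y\sm\{b\}$ is path connected and the metric of $Y$ is intrinsic, yet two points at distance $2\dl$ straddling $b$ can only be joined inside $Y\sm\{b\}$ by a path of length $4-2\dl$. This situation is not excluded by the hypotheses of the theorem: the identity map of $Y$ with $A=B=\{b\}$ is a (one-sheeted) locally isometric branched covering in the sense of the paper. So the rerouting lemma cannot be deduced from the stated hypotheses, and your argument is incomplete precisely at the point you flagged as the main obstacle. Your proposed alternative --- lifting through the branch points --- also fails in general: the fiber $f^{-1}(b)$ may be empty (e.g.\ for the inclusion of $\R^2\sm\{0\}$ into $\R^2$ with $B=\{0\}$), and even when it is not, the lifts of distinct edges abutting $b$ need not terminate at the same point of the fiber.

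The paper circumvents the rerouting entirely. Since $\smt$ is by definition an infimum of $\mst$ over finite supersets of the terminals, $\mst$ depends continuously on the points, and $Y\sm B$ is dense in $Y$ ($B$ being discrete and the path connected space $Y$ having no isolated points), Lemma~\ref{lem:dence} gives $\smt_{Y\sm B}(N)=\smt_Y(N)$ for the \emph{restricted} metric on $Y\sm B$, and Assertion~\ref{ass:dence} then gives $\sr(Y\sm B)=\sr(Y)$ and $\sr(X\sm A)=\sr(X)$. Only finitely many points are moved, never whole curves, so no detour estimate is needed; the theorem reduces to the non-branched covering $f\:X\sm A\to Y\sm B$, to which your core argument applies. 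You should replace the perturbation-plus-rerouting step by this density/continuity reduction. (A residual subtlety, present in the paper as well, is that the restricted metric on $Y\sm B$ need not be intrinsic, and this resurfaces when one must realize a near-optimal configuration by an actual network inside $Y\sm B$ before lifting; so the difficulty you identified is real --- the density argument relocates it into the hypotheses of Theorem~\ref{thm:nonbranch_bundle} rather than making it vanish.)
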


We prove Theorem~\ref{thm:branch_bundle} in two steps. At first we reduce the problem to non-branched coverings, and then complete the proof considering the case of non-branched coverings.

\subsection{Reduction to Non-Branched Coverings Case}

We need the following Lemma.

\begin{lem}\label{lem:dence}
Let $Y$ be an everywhere dense subset of a metric space $X$. Endow $Y$ with a metric obtained as the restriction of the initial distance function defined on $X$. Then for any finite subset $M\subset Y$ the lengths of shortest trees connecting $M$ in $X$ and in $Y$ coincide with each other. Therefore, for any finite non-single point $M$ the equality $\sr_X(M)=\sr_Y(M)$ holds.
\end{lem}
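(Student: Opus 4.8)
The plan is to prove the two inequalities $\smt_X(M) \le \smt_Y(M)$ and $\smt_Y(M) \le \smt_X(M)$ separately, after first recording a compatibility fact that will be used throughout: for any finite set $N$ contained in $Y$, since the metric on $Y$ is merely the restriction of $\r$, the weight of every spanning tree on $N$ is the same whether computed in $Y$ or in $X$; hence $\mst_Y(N) = \mst_X(N)$. The easy direction is then immediate. The quantity $\smt_X(M)$ is the infimum of $\mst_X(N)$ over all finite $N$ with $M \ss N \ss X$, whereas $\smt_Y(M)$ is the infimum of the same quantity over the smaller family of $N$ with $M \ss N \ss Y$; an infimum over a smaller family is no smaller, so $\smt_X(M) \le \smt_Y(M)$.

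The substantial direction is $\smt_Y(M) \le \smt_X(M)$, and this is where density enters. Fix $\e > 0$ and choose a finite set $N = M \cup \{p_1,\dots,p_k\} \ss X$ together with a minimal spanning tree $T$ on $N$ such that $\mst_X(N) < \smt_X(M) + \e/2$. The auxiliary vertices $p_i$ may lie in $X \sm Y$, so, using that $Y$ is dense, I would pick $q_i \in Y$ with $\r(p_i, q_i) < \dl$ and set $N' = M \cup \{q_1,\dots,q_k\} \ss Y$. Replacing each $p_i$ by $q_i$ in $T$ produces a connected graph $T'$ spanning $N'$ with the same combinatorial structure; by the triangle inequality each edge changes length by at most $2\dl$, so the total weight of $T'$ exceeds that of $T$ by at most $2\dl\,(|N|-1)$. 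Consequently $\mst_Y(N') \le \mst_X(N) + 2\dl\,(|N|-1)$, and since $|N|$ is now fixed, choosing $\dl$ small enough forces $\mst_Y(N') < \smt_X(M) + \e$, whence $\smt_Y(M) \le \smt_X(M) + \e$. Letting $\e \to 0$ gives the claim.

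Combining the two inequalities yields $\smt_X(M) = \smt_Y(M)$, and together with $\mst_X(M) = \mst_Y(M)$ (the compatibility fact applied to $N = M$, which is legitimate for $M$ having at least two distinct points, so that $\mst_X(M) \ne 0$) this gives $\sr_X(M) = \smt_X(M)/\mst_X(M) = \smt_Y(M)/\mst_Y(M) = \sr_Y(M)$. I expect the delicate point to be the approximation step: one must check that perturbing the Steiner vertices by $\dl$ controls the spanning-tree weight uniformly, and that possible coincidences among the $q_i$ (or with points of $M$) cause no trouble. These coincidences only help, since $\mst_Y(N')$ is bounded above by the weight of any connected graph spanning $N'$, so $T'$ need not literally remain a tree for the estimate to hold. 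This bookkeeping is the main, though essentially routine, obstacle.
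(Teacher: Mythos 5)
Your proposal is correct and follows essentially the same route as the paper's proof: the easy inequality comes from comparing infima over nested families, and the reverse inequality comes from approximating the auxiliary (Steiner) points by nearby points of $Y$ and controlling the change in spanning-tree length. Your triangle-inequality bookkeeping simply makes explicit the continuity of $\mst$ that the paper invokes in one line.
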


\begin{proof}
The inequality $\smt_X(M)\le\smt_Y(M)$ is evident from the definition of the length of a shortest tree. The inverse inequality follows from the continuity of the distance between points. Indeed, for any positive $\e$ there exists an $N\subset X$ with $M\subset N$, such that  $\mst(N)\le\smt_X(M)+\e$. Since $Y$ is an everywhere dense subset of $X$,  then there exists a subset $N'\subset Y$, $M\subset N'$, such that $\bigl|\mst(N') -\mst(N)\bigr|<\e$, therefore
$$
\smt_Y(M)\le\mst(N')\le\mst(N)+\e\le\smt_X(M) +2\e,
$$
that implies the required inequality due to arbitrariness of $\e$.
\end{proof}

\begin{ass}\label{ass:dence}
Let  $Y$ be an everywhere dense subset of a metric space $X$. Endow $Y$ with a metric obtained as the restriction of the initial distance function defined on  $X$. Then  $\sr(X)=\sr(Y)$.
\end{ass}

\begin{proof}
Indeed, Assertion~\ref{ass:sub} implies that $\sr(X)\le\sr(Y)$. To prove the inverse inequality, let us fix an arbitrary positive $\e$  and a finite non-single point set $M\subset X$ such that  $\sr_X(M)\le\sr(X)+\e$. Due to Assertion~\ref{ass:cont}, there exists a finite non-single point set $N\subset Y$ such that $\bigl|\sr_X(N)-\sr_X(M)\bigr|<\e$. The latter inequality together with Lemma~\ref{lem:dence} implies that 
$$
\sr(Y)\le\sr_Y(N)=\sr_X(N)\le\sr_X(M)+\e\le\sr(X)+2\e,
$$
Now the inequality required follows from the arbitrariness of  $\e$.
\end{proof}

\begin{cor}
Let  $X$ and $Y$ be path connected metric spaces with intrinsic metrics, $f\:X\to Y$ be a locally isometric branched covering, $A\subset X$ be the set of singular points of $f$, and $B\subset Y$ be the set of singular values of $f$. Then Theorem~$\ref{thm:branch_bundle}$ is valid for the branched covering $f\:X\to Y$, if and only if it is valid for the non-branched covering $f\:X\sm A\to Y\sm B$.
\end{cor}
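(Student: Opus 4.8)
The statement to be proved is the equivalence
\[
\sr(Y)\ge\sr(X)\quad\Longleftrightarrow\quad \sr(Y\sm B)\ge\sr(X\sm A),
\]
where the left inequality is the conclusion of Theorem~\ref{thm:branch_bundle} for the branched covering $f\colon X\to Y$ and the right one is its conclusion for the non-branched covering $f\colon X\sm A\to Y\sm B$. The plan is to obtain both implications simultaneously from the two equalities $\sr(X)=\sr(X\sm A)$ and $\sr(Y)=\sr(Y\sm B)$: once these are in hand, the two displayed inequalities coincide word for word, so the biconditional is immediate. To get the equalities I would apply Assertion~\ref{ass:dence} to the pairs $X\sm A\ss X$ and $Y\sm B\ss Y$.

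First I would verify the hypotheses of Assertion~\ref{ass:dence}, namely that $X\sm A$ is everywhere dense in $X$ and $Y\sm B$ is everywhere dense in $Y$. Since $X$ is path connected and has more than one point, it is connected and has no isolated points; hence every point of $X$, and in particular every point of the singular set $A$, is a limit of points distinct from it. As $A$ is a closed discrete subset, each $a\in A$ has a punctured neighborhood disjoint from $A$, so points of $X\sm A$ lie arbitrarily close to $a$; thus $X\sm A$ is dense in $X$, and the same reasoning applies to $Y\sm B\ss Y$. Assertion~\ref{ass:dence} then yields $\sr(X)=\sr(X\sm A)$ and $\sr(Y)=\sr(Y\sm B)$ for the metrics on $X\sm A$ and $Y\sm B$ obtained by restriction, which completes the formal reduction.

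The hard part, and the only genuinely nontrivial point, is that Assertion~\ref{ass:dence} concerns the restriction of the ambient distance function, whereas Theorem~\ref{thm:branch_bundle} must be applied to $X\sm A$ and $Y\sm B$ endowed with their own intrinsic metrics. So I would still have to check that the restriction of the intrinsic metric of $X$ to $X\sm A$ coincides with the intrinsic metric of the subspace $X\sm A$ (and likewise for $Y$). One inequality is trivial, since every curve in $X\sm A$ is a curve in $X$. For the reverse one I would take a curve in $X$ joining two points of $X\sm A$ whose length is within $\e$ of their distance; by compactness it meets the closed discrete set $A$ in finitely many points, and near each of them I would reroute the curve inside a small punctured ball at the cost of an arbitrarily small increase in length. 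Such a detour exists because $X\sm A$ is path connected — a property built into the definition of a branched covering — and its cost tends to zero with the radius of the ball. Making this rerouting estimate precise, using the local model of the covering near a singular point, is where I expect the real work to lie.
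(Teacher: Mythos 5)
Your first two paragraphs reconstruct exactly the argument the paper intends (the corollary is stated without proof, as an immediate consequence of Assertion~\ref{ass:dence}): the complements $X\sm A$ and $Y\sm B$ of the discrete singular sets are everywhere dense, so $\sr(X)=\sr(X\sm A)$ and $\sr(Y)=\sr(Y\sm B)$, and the two conclusions $\sr(Y)\ge\sr(X)$ and $\sr(Y\sm B)\ge\sr(X\sm A)$ become the same inequality. That part is correct and is the paper's approach. Your third paragraph raises a point the paper silently passes over: Assertion~\ref{ass:dence} is about the \emph{restricted} metric on $X\sm A$, while Theorem~\ref{thm:nonbranch_bundle} must be applied to $X\sm A$ and $Y\sm B$ as spaces with \emph{intrinsic} metrics, so one does need the two metrics to coincide (or at least to produce the same Steiner ratio). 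This is a genuine observation, not pedantry; and your proposed fix --- rerouting a near-geodesic around the finitely many singular points it meets at arbitrarily small cost --- is the right idea, but as you yourself note you do not carry it out, and it is not automatic for general metric spaces with intrinsic metrics: path connectedness of $X\sm A$ alone does not bound the cost of a detour around a point of $A$, so some use of the local structure of the covering (or an extra hypothesis) is needed there. In the paper's actual applications (two-dimensional polyhedra and cones, i.e., Riemannian manifolds of dimension at least two with isolated singularities) removing isolated points does not change the intrinsic metric, so the issue is harmless; but in the generality in which the corollary is stated your reservation stands, and your proof is incomplete at exactly the point where the paper's is silent.
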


\subsection{Non-Branched Coverings Case}
Recall that a  \emph{topological graph\/} is defined as a finite one-dimensional cell complex, i.e., a topological space obtained from a finite set of segments by identification of some their end-points. Zero-dimensional cells of a topological graph are referred as its  \emph{vertices}, and its one-dimensional cells are called \emph{edges\/} of the graph. Since we are interested in boundary problems, then we always assume that each topological graph $G$ under consideration possesses a fixed subset $\d G$ of its vertex set referred as a \emph{boundary\/} of $G$.

A \emph{network\/} in a topological space is a continuous mapping of a connected topological graph into this space. We say that a network $\G\:G\to X$ \emph{connects $M\ss X$}, if  $M=\G(\d G)$.

For a metric space $X$ with strictly intrinsic metric the value $\smt_X(M)$ can be defined in terms of networks in the following way. At first, we define the \emph{length $|\G|$ of a network $\G$} as the sum of the lengths of all its edges--curves. Then we consider all networks $\G$ connecting $M$ and put $\smt_X(M)=\inf_{\G}|\G|$. It is easy to see that the same value $\smt_X(M)$ can be obtained by considering only trees (i.e., connected acyclic graphs).

We need the following mappings lifting theorem, see, for example,~\cite{FomFuks}. We state it in a particular case which is important for us.

\begin{ass}\label{ass:lift}
Let $f\:X\to Y$ be a non-branched covering, $G$ be a topological tree, and $\G_Y\:G\to Y$ be a network. Then there exists a lifting network $\G_Y\:G\to X$ such that  $\G_X=f\c\G_Y$.
\end{ass}

Thus, it remains to prove the following result.

\begin{thm}\label{thm:nonbranch_bundle}
Let  $X$ and $Y$ be path connected metric spaces with intrinsic metrics, and $f\:X\to Y$ be a locally isometric non-branched covering. Then the Steiner ratio of the base $Y$ is not less than the Steiner ratio of the total space $X$.
\end{thm}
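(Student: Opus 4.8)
The plan is to establish the equivalent inequality $\sr(X)\le\sr(Y)$ by showing that every finite $M\ss Y$ can be ``lifted'' to a finite subset $\tilde M\ss X$ whose Steiner ratio is no larger, so that $\sr(X)\le\sr_X(\tilde M)\le\sr_Y(M)$; taking the infimum over all $M$ then gives the claim. The two consequences of Assertion~\ref{ass:cover_prop} conveniently pull in opposite directions: $f$ preserves lengths of curves (so lifts of trees are no longer than the originals), while $f$ does not increase distances, i.e.\ $\r_Y(f(x),f(x'))\le\r_X(x,x')$. The first fact will bound $\smt$ from above after lifting, the second will bound $\mst$ from below, and together they force the ratio down.

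First I would fix a finite set $M=\{m_1,\ldots,m_n\}\ss Y$ with $n\ge2$ and an arbitrary $\e>0$, and present a near-optimal shortest tree as a tree network. Concretely, choosing a finite $N\sp M$ with $\mst_Y(N)\le\smt_Y(M)+\e$ and joining the vertices of its minimal spanning tree by paths whose lengths approximate the corresponding distances, I obtain a network $\G_Y\:G\to Y$ on a topological \emph{tree} $G$, with $\d G$ mapping bijectively onto $M$ and with $|\G_Y|\le\smt_Y(M)+2\e$. By the lifting Assertion~\ref{ass:lift} there is a network $\G_X\:G\to X$ with $f\c\G_X=\G_Y$, and by Assertion~\ref{ass:cover_prop} it has the same length $|\G_X|=|\G_Y|$. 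Setting $\tilde M=\G_X(\d G)$ we get $f(\tilde M)=M$; since the $m_i$ are distinct, their lifts $\tilde m_i=\G_X(v_i)$ are distinct, so $\tilde M$ is an honest $n$-point lift of $M$ connected by $\G_X$. Hence $\smt_X(\tilde M)\le|\G_X|\le\smt_Y(M)+2\e$.

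Next I would compare spanning trees for this same $\tilde M$. The distance non-increase gives $\r_X(\tilde m_i,\tilde m_j)\ge\r_Y(m_i,m_j)$ for every pair, so for every tree topology $T$ on the $n$ vertices its total length computed with the $X$-distances is at least its total length computed with the $Y$-distances; minimizing over $T$ yields $\mst_X(\tilde M)\ge\mst_Y(M)$. Combining the two estimates,
$$
\sr_X(\tilde M)=\frac{\smt_X(\tilde M)}{\mst_X(\tilde M)}\le\frac{\smt_Y(M)+2\e}{\mst_Y(M)},
$$
so that $\sr(X)\le\sr_X(\tilde M)\le\bigl(\smt_Y(M)+2\e\bigr)/\mst_Y(M)$. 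Letting $\e\to0$ gives $\sr(X)\le\sr_Y(M)$, and taking the infimum over all finite non-single-point $M\ss Y$ yields $\sr(X)\le\sr(Y)$, as required.

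The main obstacle I expect is not the inequality chain but the careful bookkeeping behind the lift: one must realize a near-optimal shortest tree in $Y$ as a network on a topological tree (so that Assertion~\ref{ass:lift} applies) with length arbitrarily close to $\smt_Y(M)$, and verify that the lift of its boundary is a genuine $n$-point set projecting onto $M$. A related subtlety is that the metrics are assumed only intrinsic, not strictly intrinsic, so shortest trees and their realizing paths exist merely as infima; this forces the approximation by paths in the construction of $\G_Y$ and is the reason the $\e$-terms must be carried through every step rather than set to zero at the outset.
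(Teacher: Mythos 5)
Your proposal is correct and follows essentially the same route as the paper: lift a near-optimal tree network via Assertion~\ref{ass:lift}, use length preservation to bound $\smt_X$ of the lifted boundary set from above, and use the distance non-increasing property to bound $\mst_X$ from below, then combine and let $\e\to0$. The only differences are cosmetic (an additive $2\e$ in place of the paper's multiplicative $(1+\e)$, and a more explicit construction of the approximating tree network, which in fact addresses the intrinsic-versus-strictly-intrinsic subtlety more carefully than the paper does).
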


 \begin{proof}
Let  $N$ be an arbitrary finite non-single point subset in $Y$, and $\e$ be an arbitrary positive. Show that there exists a finite set $M\subset X$ such that  $f(M)=N$ and $\smt_X(M)\le(1+\e)\,\smt_Y(N)$. Indeed, by definition of $\smt_Y(N)$, there exists a measurable network $\G_Y\:G\to Y$ parameterized by some topological tree $G$ such that $N=\G_Y(\d G)$ and $|\G_Y|\le(1+\e)\,\smt_Y(N)$. Assertion~\ref{ass:lift} implies that there exists a lifting $\G_X\:G\to X$. Put $M=\G_X(\d G)$. By definition of the lifting, we have $f(M)=N$. Due to Assertion~\ref{ass:cover_prop}, the lengths of the networks $\G_X$ and $\G_Y$ coincide with each other, therefore,  $\smt_X(M)\le|\G_X|=|\G_Y|\le(1+\e)\,\smt_Y(N)$, and hence, the set $M$ matches our claim.

On the other hand, Assertion~\ref{ass:cover_prop} implies that for any finite set $M\subset X$ the inequality $\mst_X(M)\ge \mst_Y\bigl(f(M)\bigr)=\mst_Y(N)$ is valid. So, 
$$
\sr_X(M)=\frac{\smt_X(M)}{\mst_X(M)}\le\frac{(1+\e)\smt_Y(N)}{\mst_Y(N)}=(1+\e)\,\sr_Y(N)\le\sr_Y(N)+\e.
$$
Thus, for any $\e>0$ and any finite $N\subset Y$ consisting of at least two points, we have constructed a set $M\ss X$ such that $\sr_X(M)\le\sr_Y(N)+\e$, and hence $\sr(X)\le\sr(Y)+\e$. Since $\e$ is arbitrary, then the required inequality is valid.
\end{proof}

\begin{cor}\label{cor:cover-e}
Let  $Y$ be a connected Riemannian manifold with isolated singularities, $f\:\R^n\to Y$ be a locally isometric branched covering, and the set of singularities of $Y$ is contained in the set of singular values of the mapping $f$. Then $\sr(Y)=\sr(\R^n)$.
\end{cor}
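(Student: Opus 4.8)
The plan is to prove the two inequalities $\sr(Y)\ge\sr(\R^n)$ and $\sr(Y)\le\sr(\R^n)$ separately, each from one of the results already available. The first is immediate from the main theorem: since $f\:\R^n\to Y$ is a locally isometric branched covering and both $\R^n$ (with the Euclidean metric) and $Y$ (with its intrinsic metric) are path connected metric spaces with intrinsic metrics, Theorem~\ref{thm:branch_bundle} applied with total space $\R^n$ and base $Y$ gives $\sr(Y)\ge\sr(\R^n)$ at once. It is worth noting that this is the only place the covering $f$ is used; the reverse inequality will rely solely on $Y$ being a Riemannian manifold with isolated singularities.

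For the reverse inequality I would pass to the smooth locus $Y_0=Y\sm\operatorname{Sing}(Y)$, the complement of the isolated singular set. By the very meaning of a Riemannian manifold with isolated singularities, $Y_0$ is a connected $n$-dimensional Riemannian manifold --- connectedness is preserved since we delete only isolated points from a connected manifold (here I take $n\ge2$; the case $n=1$ is degenerate and should be disposed of separately, as $\sr(\R)=1$). Theorem~\ref{thm:riem} then yields $\sr(Y_0)\le\sr(\R^n)$, where $Y_0$ is equipped with its intrinsic Riemannian distance. Simultaneously $Y_0$ is everywhere dense in $Y$, so endowing it with the restriction of the intrinsic metric of $Y$ and invoking Assertion~\ref{ass:dence} gives $\sr(Y)=\sr(Y_0^{\mathrm{restr}})$. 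Granting that these two metrics on $Y_0$ agree, the chain $\sr(Y)=\sr(Y_0^{\mathrm{restr}})=\sr(Y_0)\le\sr(\R^n)$ closes, and together with the first inequality it produces the desired equality $\sr(Y)=\sr(\R^n)$.

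The heart of the argument, and the step I expect to be the main obstacle, is the identification of the restriction to $Y_0$ of the intrinsic metric of $Y$ with the intrinsic Riemannian metric of $Y_0$. The inequality $d_Y|_{Y_0}\le d_{Y_0}$ is trivial, since every curve in $Y_0$ is a curve in $Y$ of the same length. The reverse, $d_{Y_0}\le d_Y|_{Y_0}$, is the delicate point: a length-minimizing path in $Y$ between two points of $Y_0$ may, in principle, pass through singular points, which are forbidden for paths computing $d_{Y_0}$. Here I would use that $\operatorname{Sing}(Y)$ is discrete and $n\ge2$ to replace any path through a singular point by a nearby path making a small geodesic detour around it, increasing the length by an arbitrarily small amount; taking the infimum shows the two path-length metrics coincide, whence $\sr(Y_0^{\mathrm{restr}})=\sr(Y_0)$ and the reduction above is justified.
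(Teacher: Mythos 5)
Your proof is correct and follows essentially the same route as the paper: the lower bound $\sr(Y)\ge\sr(\R^n)$ comes from Theorem~\ref{thm:branch_bundle} applied to the branched covering, and the upper bound comes from deleting a discrete set (the paper removes the whole set $B$ of singular values rather than just the singular locus of $Y$, which changes nothing essential) and then invoking Assertion~\ref{ass:dence} together with Theorem~\ref{thm:riem}. Your explicit verification that the restriction of the intrinsic metric of $Y$ to the punctured space coincides with its own intrinsic Riemannian distance addresses a subtlety the paper passes over in silence, but it is a refinement of the same argument rather than a different one.
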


\begin{proof}
By $A$ we denote the set of singular points of $f$, and by $B$ we denote the set of singular values of  $f$. Then, due to Assertion~\ref{ass:dence}, we have  $\sr(Y)=\sr(Y\sm B)$. Due to assumptions,  $Y\sm B$ is a Riemannian manifold  (without singularities), therefore, due to Theorem~\ref{thm:riem}, we conclude that $\sr(Y\sm B)\le\sr(\R^n)$. On the other hand, Theorem~\ref{thm:branch_bundle} implies that $\sr(Y)\ge\sr(\R^n)$, that completes the proof.
\end{proof}

\section {Examples: Polyhedra and Cones}
\markright {\thesection.~Examples: Polyhedra and Cones}
In this Section we give several examples of calculation and estimation of the Steiner ratio obtained by application of the technique elaborated above.

A concept of a multidimensional polyhedron (or a polyhedral surface) is well-known. Corresponding definitions can be found, for example, in books~\cite{AD} and~\cite{Zieg}. In what follows we just need to mention that from the metric geometry point of view an $n$-dimensional polyhedron is an $n$-dimensional Riemannian manifold with isolated singular points (the \emph{vertices\/} of the polyhedron) and with Euclidean metric. 

Assertion~\ref{ass:dence} and Theorem~\ref{thm:riem} immediately implies the following result.

\begin{cor}
The Steiner ratio of an arbitrary $n$-dimensional polyhedron does not exceed $\sr(\R^n)$.
\end{cor}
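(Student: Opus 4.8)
The plan is to follow the same route as in the proof of Corollary~\ref{cor:cover-e}. By definition an $n$-dimensional polyhedron $P$ is a connected $n$-dimensional Riemannian manifold carrying a Euclidean metric and having only isolated singular points, namely its vertices; denote by $B\ss P$ the (discrete) set of these vertices. After deleting $B$ one obtains an honest Riemannian manifold without singularities, to which Theorem~\ref{thm:riem} applies, while Assertion~\ref{ass:dence} guarantees that the deletion does not change the Steiner ratio. Chaining these two facts yields the desired estimate.

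In detail, I would first observe that $B$ is discrete, hence $P\sm B$ is everywhere dense in $P$, each vertex being a limit of nearby regular points. Endowing $P\sm B$ with the restriction of the distance function of $P$, Assertion~\ref{ass:dence} gives $\sr(P)=\sr(P\sm B)$. Since the vertices were the only singular points of $P$, the space $P\sm B$ is a connected $n$-dimensional Riemannian manifold without singularities, so Theorem~\ref{thm:riem} yields $\sr(P\sm B)\le\sr(\R^n)$. Combining the two relations gives $\sr(P)=\sr(P\sm B)\le\sr(\R^n)$, which is exactly the claim.

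The routine part here is trivial; the points that genuinely require care, and where I expect the only real obstacle, concern the passage from $P$ to $P\sm B$ in dimension $n\ge 2$ (the degenerate one-dimensional case aside). First, connectedness of $P\sm B$ must be checked: removing a discrete subset from a connected manifold of dimension at least two leaves it connected, so this is safe. Second, and more delicate, Theorem~\ref{thm:riem} speaks of the intrinsic Riemannian metric of $P\sm B$, whereas Assertion~\ref{ass:dence} supplies $P\sm B$ with the metric restricted from $P$; one must verify that these two metrics coincide. This holds because any path in $P$ joining two regular points can be perturbed to avoid the discrete set $B$ with an arbitrarily small increase of length, so that forbidding the vertices does not alter the infimum of path lengths. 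Once these two facts are in place, the corollary follows immediately from the cited results.
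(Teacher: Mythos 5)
Your argument is exactly the one the paper intends: the corollary is derived by combining Assertion~\ref{ass:dence} (removing the dense complement of the discrete vertex set does not change the Steiner ratio) with Theorem~\ref{thm:riem} applied to the resulting nonsingular Riemannian manifold. Your additional checks on connectedness and on the agreement of the restricted and intrinsic metrics of $P\sm B$ are sensible refinements of the same route, not a different proof.
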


\begin{ass}
If a two-dimensional polyhedron contains a vertex, whose total angle, i.e., the sum of all adjacent flat angles of the corresponding faces,  is more than $2\pi$, then its Steiner ratio is strictly less than the one of the Euclidean plane.
\end{ass}

\begin{proof}
Consider an equilateral triangle inscribed into a circle of a small radius $r$ centered at such a vertex. Then a shortest tree connecting the vertices of the triangle is not longer than $3r$, but the minimal spanning tree is longer than $2\sqrt{3}\,r$. The proof is completed.
\end{proof}

\begin{ass}\label{ass:negative}
The infimum of the Steiner ratios over all two-dimensional polyhedra is equal to $1/2$. Similarly, the infimum of the Steiner ratios over all two-dimensional cones is equal to $1/2$.
\end{ass}

\begin{proof}
Consider a polyhedron $X$ containing a vertex with total angle $\pi k$, $k\ge 3$, and let $M$ be a regular $k$-gon inscribed into a circle centered at this vertex and having a sufficiently small radius $r$. Then  $\smt_X(M)$ does not exceed $k\,r$, but $\mst_X(M)=2(k-1)r$, therefore $\sr_X(M)\le(1/2)k/(k-1)\to1/2$ as $k\to\infty$. In the case of two-dimensional cones, instead of the polyhedron $X$ we consider a cone with total angle $\pi k$, $k\ge 3$ at its vertex. Assertion is proved.
\end{proof}

A triangular pyramid is called an  \emph{isosceles tetrahedron\/} or a \emph{disphenoid}, if all its faces are pairwise congruent. It is well-known that the surface of an isosceles tetrahedron can be locally isometric branched covered by the Euclidean plane, see~\cite{IPTTorus}. Applying Corollary~\ref{cor:cover-e} we obtain the following result.

\begin{cor}\label{cor:2-dim_tetr}
The Steiner ratio of the surface of an isosceles tetrahedron is equal to the Steiner ratio of the Euclidean plane.
\end{cor}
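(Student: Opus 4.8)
The plan is to recognize the surface $Y$ of an isosceles tetrahedron as a two-dimensional flat Riemannian manifold with isolated conical singularities, and then to apply Corollary~\ref{cor:cover-e} with $n=2$. Since all the heavy lifting has already been done in Theorems~\ref{thm:riem} and~\ref{thm:branch_bundle} (packaged together in Corollary~\ref{cor:cover-e}), the real work is purely to verify the geometric hypotheses of that corollary for this particular $Y$.

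First I would describe the metric structure of $Y$. Away from its four vertices the surface is glued from the Euclidean interiors and edges of four congruent triangular faces, so the metric is locally Euclidean and $Y\sm\{\text{vertices}\}$ is a genuine flat Riemannian manifold; the only singularities are the four vertices. At each vertex exactly three face-corners meet, and since all faces are congruent triangles with angles $\a,\b,\g$, the three corners meeting at any given vertex are precisely one of each type. Hence the total angle at every vertex equals $\a+\b+\g=\pi$. Thus $Y$ is a flat sphere with four conical points, each of cone angle $\pi=2\pi/2$; this is consistent with the Gauss--Bonnet relation, the four angle defects $\pi$ summing to $4\pi=2\pi\chi(S^2)$.

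Next I would produce a locally isometric branched covering $f\:\R^2\to Y$ whose set of singular values contains the four vertices. The cone angle $\pi$ at each vertex is exactly of the form $2\pi/k$ with $k=2$, which is precisely what permits branching: near a vertex the cone of angle $\pi$ is doubly covered by a Euclidean disc by a degree-two branched map, smooth away from the center. Globally such a covering can be assembled, for instance by viewing $Y$ as the quotient of a flat torus by an isometric involution with four fixed points (the ``pillowcase'') and composing with the universal covering $\R^2\to$ torus; this is the content of the cited construction~\cite{IPTTorus}. The four vertices are then the images of the branch points and therefore lie in the set of singular values $B$ of $f$.

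Finally I would check that all hypotheses of Corollary~\ref{cor:cover-e} hold: $Y$ is a connected Riemannian manifold with isolated singularities (its four vertices), $f\:\R^2\to Y$ is a locally isometric branched covering, and the singularities of $Y$ are contained in the set of singular values of $f$. The corollary then yields $\sr(Y)=\sr(\R^2)$ at once. I expect the main obstacle to be the construction of the branched covering together with the verification that it is locally isometric and that all four vertices occur among its singular values; once this is granted, the reduction to Corollary~\ref{cor:cover-e} is immediate.
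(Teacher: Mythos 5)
Your proposal is correct and follows exactly the route the paper takes: the paper likewise observes that the disphenoid surface admits a locally isometric branched covering by the Euclidean plane (citing~\cite{IPTTorus} for the construction you describe via the flat torus quotient) and then applies Corollary~\ref{cor:cover-e}. Your additional verification that each vertex has total angle $\a+\b+\g=\pi$ is a useful elaboration of what the paper leaves to the cited reference, but it is not a different argument.
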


The action of the group $\Z_k$ on the Euclidean plane by rotations by the angles $2\pi/k$ around the origin defines the locally isometric branched covering of the cone with angle of $2\pi/k$ at the vertex by the Euclidean plane. Corollary~\ref{cor:cover-e} implies the following result.

\begin{cor}\label{cor:2-dim_cones}
The Steiner ratio of the two-dimensional cone, whose angle at the vertex is equal to $2\pi/k$, where $k$ is a positive integer, is equal to the Steiner ratio of the Euclidean plane.
\end{cor}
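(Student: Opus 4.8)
The plan is to exhibit the flat cone with vertex angle $2\pi/k$ as a quotient of the Euclidean plane and then invoke Corollary~\ref{cor:cover-e} with $n=2$. Concretely, let $\Z_k$ act on $\R^2$ by the rotations $z\mapsto e^{2\pi i j/k}z$, $j=0,\ldots,k-1$, about the origin, and let $Y=\R^2/\Z_k$ carry the quotient (intrinsic) metric, with quotient map $f\:\R^2\to Y$. Cutting $\R^2$ into $k$ congruent sectors of angle $2\pi/k$ and identifying their bounding rays under the action shows that $Y$ is precisely the flat cone whose total angle at the image $v$ of the origin equals $2\pi/k$. Once $f$ is shown to be a locally isometric branched covering whose set of singular values contains the singular set of $Y$, Corollary~\ref{cor:cover-e} yields $\sr(Y)=\sr(\R^2)$ immediately.

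The verification of the hypotheses is the technical heart of the argument, though each point is routine. First, on $\R^2\sm\{0\}$ the rotation action is free and properly discontinuous, so the restriction $f\:\R^2\sm\{0\}\to Y\sm\{v\}$ is a genuine ($k$-sheeted) covering; since $\Z_k$ acts by isometries and the quotient of a length space by such an action is again a length space, the quotient metric is intrinsic and makes this covering locally isometric. Thus, setting $A=\{0\}$ and $B=\{v\}$, both discrete, we obtain exactly a locally isometric branched covering $f\:\R^2\to Y$ in the sense of the Main Result section. Second, $Y$ is a connected two-dimensional Riemannian manifold whose only isolated singularity, for $k\ge 2$, is the cone point $v$, and $v\in B$; hence the singular set of $Y$ is contained in the set of singular values of $f$. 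The case $k=1$ is trivial, since then $Y=\R^2$.

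With both hypotheses in place, Corollary~\ref{cor:cover-e} applies verbatim and gives $\sr(Y)=\sr(\R^2)$, as claimed. I expect the only genuine obstacle to be the careful identification of the quotient $\R^2/\Z_k$ with the cone of the stated angle, together with the confirmation that the quotient metric is intrinsic and that $f$ meets the precise definition of a locally isometric branched covering. All of this is standard metric-geometry bookkeeping, and no inequality comparable to $\sr(Y)\ge\sr(\R^n)$ from Theorem~\ref{thm:branch_bundle} needs to be reproved here, since that estimate is already packaged inside Corollary~\ref{cor:cover-e}.
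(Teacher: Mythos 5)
Your proposal is correct and follows exactly the paper's route: the paper likewise realizes the cone as the quotient of $\R^2$ by the $\Z_k$-rotation action, observes that this gives a locally isometric branched covering with the cone point as the only singular value, and then cites Corollary~\ref{cor:cover-e}. Your write-up merely spells out the verification of the hypotheses (freeness and proper discontinuity away from the origin, intrinsicness of the quotient metric, the trivial case $k=1$) that the paper leaves implicit.
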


\section* {Acknowledgments}
\markright {\thesection.~Examples: Polyhedra and Cones}
The authors are grateful to Academician Anatoly Fomenko for permanent attention to their work. The work is partly supported by the Grant of President of RF for supporting of leading scientific schools of Russia, Project NSh--581.2014.1, and by RFBR, Project~13--01--00664a. A.~Ivanov takes the opportunity to thank Professor M.~Firer and Professor C.~Lavor for their kind invitation to the Conference ``Many Faces of Distances'' held in  UNICAMP, Campinas,  October 2014, and for their hospitality.

\markright {References}
\bibliographystyle{plain}

 \end {document}